\DeclareMathOperator{\eps}{\varepsilon}
\newtheorem{theorem}{Theorem}
\newtheorem{lemma}{Lemma}
\newtheorem{claim}{Claim}
\newtheorem{proposition}{Proposition}
\newcommand{\seq}[3]{#1_{1} #3 #1_{2} #3 \ldots #3 #1_{#2}}
\title{Polynomial removal lemma for ordered matchings}
\date{}
\author{
Lior Gishboliner
\thanks{Department of Mathematics, ETH, Z\"urich, Switzerland. Research supported in part by SNSF grant 200021\_196965. Email: lior.gishboliner@math.ethz.ch.}
\and 
Borna Šimić\thanks{Department of Mathematics, ETH, Z\"urich, Switzerland. Email: bosimic@student.ethz.ch.}
}
\begin{document}

\maketitle
\begin{abstract}
    We prove that for every ordered matching $H$ on $t$ vertices, if an ordered $n$-vertex graph $G$ is $\varepsilon$-far from being $H$-free, then $G$ contains $\text{poly}(\varepsilon) n^t$ copies of $H$. This proves a special case of a conjecture of Tomon and the first author. We also generalize this statement to uniform hypergraphs. 
\end{abstract}

\section{Introduction}
The graph removal lemma is a fundamental result in extremal graph theory, stating that for every fixed graph $H$ and $\varepsilon > 0$, if an $n$-vertex graph $G$ is $\varepsilon$-far from being $H$-free, in the sense that $\varepsilon n^2$ edges must be deleted in order to turn $G$ into an $H$-free graph, then $G$ contains at least $\delta n^{|V(H)|}$ copies of $H$, where $\delta = \delta(H,\varepsilon) > 0$. This was proved in a seminal work of Ruzsa and Szemer\'edi \cite{RS}. The removal lemma was subsequently generalized to many other combinatorial structures, notably induced subgraphs \cite{AFKS}, hypergraphs \cite{NRS,Rodl_Skokan_2,Tao} and ordered graphs \cite{ABF}. Removal lemmas are also closely related to graph property testing in the dense graph model, where they correspond to testing algorithms with constant query complexity, see the book \cite{Goldreich}. 

A drawback of the known proofs of the removal lemma (and its many generalizations) is that all such proofs rely on Szemer\'edi's regularity lemma \cite{Szemeredi} or a generalization thereof. This results in weak quantitative bounds; for example, for the graph removal lemma stated above, the best known bound \cite{Fox} is that $1/\delta \leq \text{tower}(O(\log \left(1/\varepsilon\right)))$, where $\text{tower}(x)$ is a tower of $x$ exponents. This situation has led to research on the problem of characterizing the cases where the removal lemma has polynomial bounds, namely, where $\delta$ depends polynomially on $\varepsilon$. By now there are many works of this type \cite{Alon,AB,AFN,AS_digraphs,AS_induced,GS_graph_families,GS_C4,GS_poly_hypergraph_removal,GT_hypergraph,GT_ordered}. 

Here we focus on ordered graphs. 
An ordered graph is a graph with a linear ordering on its vertices. A copy of an ordered graph $H$ in an ordered graph $G$ is an injection $\varphi : V(H) \rightarrow V(G)$ which preserves the vertex order and satisfies that $\varphi(x)\varphi(y) \in E(G)$ for every $xy \in E(H)$.

In an important work \cite{ABF}, Alon, Ben-Eliezer and Fischer proved an ordered analogue of the graph removal lemma. They further asked to study cases where the ordered removal lemma has polynomial bounds. Addressing this question, Tomon and the first author \cite{GT_ordered} characterized the ordered graphs $H$ for which the induced $H$-removal lemma has polynomial bounds. They also studied the non-induced case, and conjectured that the (non-induced) $H$-removal lemma has polynomial bounds if and only if the ordered core\footnote{The ordered core of $H$ is defined as follows. Recall that a graph homomorphism from a graph $G$ to a graph $G'$ is a map $\varphi : V(G) \rightarrow V(G')$ such that $\varphi(x)\varphi(y) \in E(G')$ for every $xy\in E(G)$. For ordered graphs $G,G'$, an ordered homomorphism from $G$ to $G'$ is a graph homomorphism which also preserves the vertex order. The ordered core of $G$ is defined as the smallest (in terms of number of vertices) subgraph of $G$ such that there is an homomorphism from $G$ to $G'$. One can show that the ordered core is unique up to ordered isomorphism.} of $H$ is an (ordered) forest. As observed in \cite{GT_ordered}, to prove this conjecture it suffices to show that the $H$-removal lemma has polynomial bounds for every ordered forest $H$. Here we make progress on this conjecture by proving it for every ordered matching $H$. We also generalize this to $s$-uniform hypergraphs, $s \geq 3$. 
An (ordered) $n$-vertex $s$-uniform hypergraph $G$ is said to be $\varepsilon$-far from being $H$-free if one has to delete at least $\varepsilon n^s$ edges to turn $G$ into an $H$-free hypergraph. Our main result is as follows.  

\begin{theorem}\label{thm:main}
For every $t \geq s \geq 2$, there exists $C = C(t)$ such that the following holds. 
Let $\varepsilon > 0$, let $H$ be an ordered $s$-uniform matching on $t$ vertices, and let $G$ be an ordered $s$-uniform hypergraph on $n$ vertices. If $G$ is $\varepsilon$-far from being $H$-free, then $G$ contains at least $(\varepsilon/C)^C n^t$ copies of $H$.  
\end{theorem}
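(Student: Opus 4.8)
The plan is to prove the counting bound directly (rather than through a cleaning/hitting-set argument) by induction on the number of edges $k=t/s$, using the linear order on $V(G)=[n]$ to cut $G$ at a single well-chosen vertex and thereby reduce to ordered matchings with fewer edges. The base case $k=1$ is immediate: an ordered $s$-edge has a unique order type, so every edge of $G$ is a copy of $H$, and being $\varepsilon$-far from the empty hypergraph means $G$ has at least $\varepsilon n^s=\varepsilon n^t$ copies. For the inductive step the governing dichotomy is whether $H$ is \emph{decomposable}, i.e.\ whether there is a position $j\in[t]$ such that every edge of $H$ lies wholly in $\{1,\dots,j\}$ or wholly in $\{j+1,\dots,t\}$.

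Consider first a decomposable $H$, and write $H=H_L\sqcup H_R$ for the two sub-matchings determined by such a $j$, with $t_L=|V(H_L)|$, $t_R=|V(H_R)|$ and $t_L+t_R=t$. Since a copy of $H$ is order-preserving, any copy of $H_L$ contained in an initial segment $[1,p]$ glues with any copy of $H_R$ contained in $[p+1,n]$ to a genuine copy of $H$; hence, writing $L(p),R(p)$ for the numbers of such one-sided copies, the number of copies of $H$ is at least $\max_{p}L(p)R(p)$. The task is to locate one gap $p^{\ast}$ at which both factors are polynomially large. I would use that $L$ is nondecreasing, $R$ is nonincreasing, and both change by at most $n^{t-1}$ as $p$ increases by one (a single new vertex lies in at most $n^{t-1}$ copies). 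The key device is to track the far condition in \emph{absolute} (un-normalized) form — the number of edge deletions, not its ratio to $n^s$ — so that the $\varepsilon$-far hypothesis yields, at a suitable crossover gap $p^{\ast}$, that $G[1,p^{\ast}]$ is still $\Omega(\varepsilon n^s)$-far from $H_L$-free while $G[p^{\ast}+1,n]$ is still $\Omega(\varepsilon n^s)$-far from $H_R$-free; monotonicity together with the small-jump property guarantees such a $p^{\ast}$ exists, because the far hypothesis keeps the relevant quantities large across the entire range of $p$. Applying the induction hypothesis on the two intervals then gives $L(p^{\ast})\ge(\varepsilon/C)^{C}n^{t_L}$ and $R(p^{\ast})\ge(\varepsilon/C)^{C}n^{t_R}$. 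Keeping the far condition absolute is exactly what makes even very unbalanced splits harmless, since these counts are normalized by $n$ rather than by the interval length; multiplying yields $(\varepsilon/C)^{2C}n^{t}$ copies.

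When $H$ is \emph{indecomposable} (for instance the crossing matching $\{1,3\},\{2,4\}$ when $s=2$), every position split is crossed by some edge and the clean product bound fails; this is where the main work lies. I would still cut $G$ at a vertex $p$ and fix the induced order-respecting split of the position set into a left block $\{1,\dots,a\}$ and a right block $\{a+1,\dots,t\}$, but now each crossing edge must be realized by a hyperedge having a prescribed number $1\le r\le s-1$ of its vertices in $[1,p]$ and the rest in $[p+1,n]$. Counting then factors as a sum, over the \emph{interface} recording the footprints of the crossing edges on the two sides, of (left completions)$\times$(right completions). The plan is to apply Cauchy–Schwarz (or Jensen/convexity) over this interface to decouple the two sides, at the cost of passing to auxiliary \emph{rooted} sub-matchings in which the crossing edges appear as partial (rooted) edges; these have smaller total size and are handled by a correspondingly strengthened induction hypothesis. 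Since the number of interface patterns is polynomial in $n$, Cauchy–Schwarz loses only polynomial factors, and the recursion depth is at most $t$, the accumulated loss stays of the form $(\varepsilon/C)^{C}$ with $C=C(t)$.

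I expect the crossing/indecomposable case to be the main obstacle, for two reasons: one must choose the cut $p$ so that an absolute far condition is inherited by at least one rooted sub-instance (again what keeps highly unbalanced cuts harmless and the final exponent at $t$), and one must control the decoupling so that the rooted sub-instances are genuinely \emph{far} from, not merely contain, the relevant rooted matchings. The $s$-uniform case then adds only bookkeeping: a vertex cut still induces an order-respecting split of the $t$ positions, a crossing hyperedge splits into a left part of size $r$ and a right part of size $s-r$, and the same decoupling applies. Carrying the far condition through this recursion, and verifying that the number of levels depends only on $t$, is where the bulk of the technical effort will go; the polynomial — rather than tower-type — dependence is precisely the payoff of replacing regularity by this order-driven splitting.
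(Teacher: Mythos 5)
Your base case and your decomposable case can be made to work, and the decomposable case is a nice, genuinely different route from the paper: the missing justification for your crossover gap $p^{\ast}$ is the (easy but unstated) observation that if $G[1,p]$ is $H_L$-free and $G[p+1,n]$ is $H_R$-free then $G$ is $H$-free --- for any candidate copy, its $t_L$-th vertex is either $\leq p$, putting a full $H_L$-copy inside $G[1,p]$, or $>p$, putting a full $H_R$-copy inside $G[p+1,n]$. This gives $f(p)+g(p)\geq \varepsilon n^s$ for \emph{every} $p$ (with $f,g$ the two absolute deletion costs), and then your monotonicity-plus-small-jumps argument and the absolute normalization of farness do produce a good $p^{\ast}$. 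But be aware that this branch never suffices on its own: already the crossing matching $\{1,3\},\{2,4\}$ and the nested matching $\{1,4\},\{2,3\}$ are indecomposable, so for a general ordered matching the indecomposable branch is unavoidable, and that is where the entire content of the theorem sits.

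In the indecomposable branch your proposal has a genuine gap, and it is located exactly where you place the "main work". Writing the number of copies as $\sum_{\iota} L(\iota)R(\iota)$ over interfaces $\iota$, Cauchy--Schwarz bounds such a correlation sum from \emph{above}; it cannot produce the lower bound you need. The sum can be zero even when $\sum_{\iota}L(\iota)$ and $\sum_{\iota}R(\iota)$ are both of maximal order, because left-completions and right-completions may be supported on disjoint sets of interfaces --- showing that many interfaces have \emph{both} factors large simultaneously is precisely the correlation problem, and no convexity or Jensen step decouples it. The difficulty is starkest for a fully crossing matching such as $\{1,3\},\{2,4\}$: at the relevant cut every vertex of $H$ belongs to a crossing edge, so the interface already records the whole copy, there are no completions on either side, and the proposed inductive step is vacuous. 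Moreover, the "correspondingly strengthened induction hypothesis" for rooted sub-matchings is never formulated, and no mechanism is given for transferring the absolute $\varepsilon$-farness of $G$ into farness of the rooted sub-instances; you flag this yourself as the main obstacle, but flagging it is not solving it. The paper avoids single-point cuts entirely for exactly this reason: it partitions $[n]$ into $1/\varepsilon$ intervals, deletes edges meeting an interval twice, and then runs a global cleaning procedure over \emph{nested} subpartitions of each interval, deleting for each $(s-1)$-tuple and each subinterval $J$ the leftmost $\beta|J|$ extension vertices. The point is that a single copy of $H$ surviving all cheap cleanings certifies, via the leftmost-deletion rule and the nesting of the partitions (which is what forces the replacement sets to appear in the correct left-to-right order), at least $\mathrm{poly}(\varepsilon)\,n^{m}$ replacements per interval, and hence $\mathrm{poly}(\varepsilon)\,n^{t}$ copies overall. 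Some device of this kind, which certifies many \emph{order-compatible} simultaneous replacements rather than trying to decouple the two sides of a cut, is what your sketch is missing.
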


Proving removal-type statements for ordered structures tends to be considerably more difficult than for their unordered counterparts. For example, the proof of the ordered removal lemma in \cite{ABF} is substantially more involved than the original proof of the removal lemma in \cite{RS}. The key difficulty is to find copies of $H$ which respect the vertex order. To deal with this difficulty, our proof uses a novel argument of considering nested partitions of the vertex-set (with each partition refining the previous one) and ``cleaning" the graph with respect to each of these levels. 

\section{Proof of Theorem \ref{thm:main}}

Assume $G$ is as in the statement of the theorem, with vertex set $[n]$. 
For two subsets $A,B \subseteq [n]$, we write $A < B$ to mean that $a<b$ for all $a \in A, b\in B$, i.e., all elements of $A$ are smaller than all elements of $B$. 
We begin by partitioning\footnote{Here and throughout the proof, for the sake of simplicity, we omit floor and ceiling signs by assuming that $n$ is divisible by an appropriate (polynomial) function of $\varepsilon$.} $[n]$ into $k := \frac 1\varepsilon$ intervals $I_1 < \dots < I_k$ of length $\varepsilon n$ each, and delete all edges with at least two vertices inside one of these intervals. Let $G_0 \subseteq G$ be the resulting hypergraph. This step deletes less than \[\frac1\varepsilon{\varepsilon n \choose 2} \cdot {n-2 \choose s-2} < \frac{\varepsilon}{2} n^s\] edges, so $G_0$ is still $\frac{\varepsilon}{2}$-far from being $H$-free. 

Set $\gamma := \frac{\varepsilon}{4t}$ (recall that $t = |V(H)|$). 
For each $1 \leq \ell \leq k$, we define $t$ nested partitions of $I_\ell$ as follows. 
Set $\mathcal{J}_{\ell,1} = \{I_{\ell}\}$. For $j = 2,\dots,t$ and for each $J \in \mathcal{J}_{\ell,j-1}$, split $J$ into intervals of length $\gamma|J|$ and add these intervals to $\mathcal{J}_{\ell,j}$. Note that for each $1 \leq j \leq t$, $\mathcal{J}_{\ell,j}$ forms a partition of $I_{\ell}$ into intervals of size $\gamma^{j-1}|I_{\ell}|$. Put $\mathcal{J}_{\ell} := \bigcup_{j = 1}^t \mathcal{J}_{\ell,j}$.
For each $v \in I_{\ell}$ and $1 \leq j \leq t$, it will be convenient to denote by $J_j(v)$ the interval in $\mathcal{J}_{\ell,j}$ containing $v$, so that
\[v \in J_t(v) \subseteq J_{t-1}(v) \subseteq \ldots \subseteq J_1(v) = I_{\ell}.\] 
Set $\beta := 2\gamma = \frac{\varepsilon}{2t}$. 
We now perform a sequence of $k$ cleaning steps and define $k$ corresponding hypergraphs $G_0 \supseteq G_1 \dots \supseteq G_k$, where $G_{\ell}$ is the hypergraph obtained after the $\ell$th cleaning step ($1 \leq \ell \leq k$). 
At step $\ell$ we clean with respect to the interval $I_{\ell}$, as follows:
For every choice of $s-1$ vertices $v_1 < v_2 < \ldots < v_{s-1}$ outside of $I_{\ell}$, and for every interval $J \in \mathcal{J}_{\ell}$, 
let $L_{\ell}(v_1, v_2, \ldots v_{s-1}, J)$ denote the leftmost $\beta|J|$ vertices $w \in J$ such that $\{v_1,\dots,v_{s-1},w\} \in E(G_{\ell-1})$, if there are at least $\beta |J|$ such vertices, and else let $L_{\ell}(v_1, v_2, \ldots v_{s-1}, J)$ be the set of all such vertices $w$. Delete all edges $\{v_1,\dots,v_{s-1},w\} \in E(G_{\ell-1})$ with $w \in L_{\ell}(v_1, v_2, \ldots v_{s-1}, J)$. The resulting hypergraph is $G_{\ell}$.
By definition, for every given $(s-1)$-tuple $\seq v{s-1},$ and for every interval $J \in \mathcal{J}_{\ell}$, this operation deletes at most $\beta|J|$ edges of the form $\{v_1,\dots,v_{s-1},w\}$ with $w \in J$. Since the intervals in $\mathcal{J}_{\ell,j}$ form a partition of $I_{\ell}$ (for every $1 \leq j \leq t$), we delete at most $\beta|I_{\ell}|$ edges when considering these intervals. Summing over $1 \leq j \leq t$, this gives a total of at most $t\beta|I_{\ell}|$ edge deletions for each of the less than $n^{s-1}$ choices of $v_1,\dots,v_{s-1}$. Therefore,
$e(G_{\ell-1}) - e(G_{\ell}) < t\beta n^{s-1} |I_{\ell}|$.
Summing over $\ell = 1,\dots,k$, we get that
\[ 
e(G_0) - e(G_k) < 
\sum_{\ell = 1}^k t\beta n^{s-1} |I_{\ell}| = t\beta n^s = \frac{\varepsilon}{2}n^s,
\] 
using our choice of $\beta$. 
As $G_0$ is $\frac{\varepsilon}{2}$-far from being $H$-free, $G_k$ must contain a copy of $H$. We will use this fact later on. 
 The key property guaranteed by the cleaning procedure is the following.

 \begin{lemma}\label{lem:main}
     Let $1 \leq \ell \leq k$ and $1 \leq m \leq t$, let $w_1 < \dots < w_m$ be vertices in $I_{\ell}$, and let $v_{i, j} \in [n] \setminus I_{\ell}$, where $1 \leq i \leq m$ and $1 \leq j \leq s-1$, such that $\{v_{i, 1}, v_{i, 2}, \ldots v_{i, s-1} ,w_i\} \in E(G_{\ell})$ for every $i = 1,\dots,m$. Then there are at least $\left( \frac{\varepsilon}{4t} \right)^{m(t+1)}n^m$ different $m$-tuples of vertices $w'_1 < \nolinebreak \dots < \nolinebreak w'_m$ in $I_{\ell}$, such that $\{v_{i, 1}, v_{i, 2}, \ldots v_{i, s-1} ,w'_i\} \in E(G_{\ell-1})$ for every $i = 1,\dots,m$.
 \end{lemma}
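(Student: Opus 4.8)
The plan is to extract the single structural consequence of the cleaning procedure and build the tuples around it. If an edge $\{v_{i,1},\dots,v_{i,s-1},w_i\}$ survives into $G_\ell$, then for \emph{every} level $j$ the vertex $w_i$ is not among the leftmost $\beta|J_j(w_i)|$ of the vertices $w\in J_j(w_i)$ forming such an edge in $G_{\ell-1}$; hence for each $1\le j\le t$ there are at least $\beta|J_j(w_i)|=\beta\gamma^{j-1}\varepsilon n$ vertices $w<w_i$ inside $J_j(w_i)$ with $\{v_{i,1},\dots,v_{i,s-1},w\}\in E(G_{\ell-1})$. I will call such $w$ \emph{good for $i$}; these are exactly the admissible values of $w'_i$. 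Taking $j=t$ already yields $\beta\gamma^{t-1}\varepsilon n=8t\,\gamma^{t+1}n\ge\gamma^{t+1}n$ good vertices for each $i$ inside the leaf $J_t(w_i)$, and this is the right bookkeeping: the target $\gamma^{m(t+1)}n^m$ (with $\gamma=\varepsilon/4t$) is precisely the product of $m$ leaf-level counts.

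The strategy this dictates is to choose each $w'_i$ from the good set of $i$ lying in a suitable interval of the nested family, arranged so that the chosen values are automatically increasing. I would argue by induction on the number of points, relative to the interval currently containing them. Given points $w_a<\dots<w_b$ in a common $J\in\mathcal{J}_{\ell,j}$, let $j^\ast$ be the coarsest level at which they no longer all lie in one child interval, and let $K_1<\dots<K_{1/\gamma}$ be the children at that level. The points split among at least two of the $K_c$; since the $K_c$ are pairwise ordered intervals, increasing tuples chosen inside the individual occupied children concatenate to an increasing tuple inside $J$. Thus the count factorises as a product over occupied children, each factor handled by induction (each occupied child holds strictly fewer points), with base case a single point, where the leaf-level estimate supplies $\ge\gamma^{t+1}n$ choices. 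Multiplying, the exponents of $\gamma$ and of $n$ add up over the $m$ points and give $\gamma^{m(t+1)}n^m$.

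The main obstacle is exactly the base of this recursion when several points \emph{fail} to separate, i.e.\ when two or more of the $w_i$ share a single leaf $J_t$. There the nested partition offers no finer interval to force an order, and the good sets of distinct points (they belong to different tuples $v_{i,\cdot}$ and are \emph{a priori} unrelated) need not interleave: all the cleaning guarantees is $\beta|J_t|$ good vertices to the \emph{left} of each $w_i$ within the leaf, which can in principle sit so that one point's good set lies entirely to the right of another's. To produce increasing tuples here I would try to invoke the left-support at the \emph{coarser} levels simultaneously, where the good sets are much larger and occupy the bigger intervals $J_j(w_i)$, and replace the clean separation by a counting argument that lower-bounds the number of increasing selections directly. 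Quantifying this while losing only a bounded factor per point, so as to preserve the exponent $t+1$, is the delicate step, and I expect the real work of the proof to lie precisely in handling these clustered configurations.

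In the generic situation the $m$ points occupy $m$ distinct leaves, the leaf intervals order the selected values for free, and the bound is literally the product $\prod_{i=1}^{m}\beta\gamma^{t-1}\varepsilon n\ge(\gamma^{t+1}n)^m=(\varepsilon/4t)^{m(t+1)}n^m$; the inductive separation above is what I would use to reduce every clustered case back to this generic computation.
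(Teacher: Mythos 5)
Your structural extraction is correct (an edge surviving into $G_{\ell}$ forces, at \emph{every} level $j$, at least $\beta|J_j(w_i)|$ good vertices strictly left of $w_i$ inside $J_j(w_i)$), your generic-case count is right, and your diagnosis of where the difficulty sits is exactly right --- but that is the problem: the clustered case is a genuine gap that your proposal explicitly defers rather than closes. And your worry there is well-founded, not just a technical nuisance. If $w_i < w_{i'}$ share a leaf $J_t$, the only vertices guaranteed by the cleaning are the \emph{leftmost} $\beta|J_t|$ good vertices for each tuple separately; since the tuples $v_{i,\cdot}$ and $v_{i',\cdot}$ are unrelated, the guaranteed set for the smaller point $w_i$ can sit entirely to the right of the guaranteed set for the larger point $w_{i'}$ (e.g., the latter occupies the far left of the leaf while the former hugs $w_i$), in which case these two sets contribute \emph{zero} increasing pairs, nowhere near the $\delta^2 n^2$ required. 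So no argument confined to leaf-level information can work, and "invoking the left-support at coarser levels simultaneously" is precisely the missing idea, not a quantification detail.

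The paper's resolution is a single clean device that makes your generic/clustered case distinction unnecessary: assign a \emph{distinct level to each index} rather than working at the leaves. Since $m \leq t$ and there are $t$ nested levels, one sets $L_i := L_{\ell}(v_{i,1},\dots,v_{i,s-1}, J_i(w_i)) \setminus J_{i+1}(w_i)$ for $i < m$, and $L_m := L_{\ell}(v_{m,1},\dots,v_{m,s-1}, J_m(w_m))$. Every element of $L_i$ is left of $w_i$ (the leftmost good vertices were deleted while $w_i$'s edge survived) and lies outside $J_{i+1}(w_i)$, hence strictly left of the entire interval $J_{i+1}(w_i) \leq J_{i+1}(w_{i+1}) \supseteq L_{i+1}$; thus $L_1 < \dots < L_m$ holds automatically \emph{even when all the $w_i$ share one leaf}. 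Punching out $J_{i+1}(w_i)$ costs only half the guarantee, because $|J_{i+1}(w_i)| = \gamma|J_i(w_i)| = \frac{\beta}{2}|J_i(w_i)|$, giving $|L_i| \geq \frac{\beta}{2}|J_i(w_i)| \geq \frac{\beta}{2}\gamma^{t-1}\varepsilon n \geq \left(\frac{\varepsilon}{4t}\right)^{t+1} n$, matching the per-point count you computed at the leaves. This index-to-level assignment is the whole reason the construction builds $t$ nested partitions --- one level per vertex of $H$ --- and it is the one idea your proposal lacks.
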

 Note that by Lemma \ref{lem:main}, if there is a copy $H_{\ell}$ of $H$ in $G_{\ell}$ having $m$ vertices in $I_{\ell}$, then in $G_{\ell-1}$ there are at least $\left( \frac{\varepsilon}{4t} \right)^{m(t+1)}n^m$ copies of $H$ which agree with $H_{\ell}$ on the vertices outside of $I_{\ell}$.
 \begin{proof}[Proof of Lemma \ref{lem:main}]
Define sets $L_1,\dots,L_m \subseteq I_{\ell}$ as follows: 
$$L_i := L_{\ell}(v_{i, 1}, v_{i, 2}, \ldots, v_{i, s-1} ,J_{i}(w_i)) \setminus J_{i+1}(w_i)$$
for $1 \leq i < m$, and 
$$L_m := L_{\ell}(v_{m, 1}, v_{m, 2}, \ldots, v_{m, s-1},J_m(w_m)).$$ 
For every $1 \leq i \leq m$ and $w'_i \in L_i$, it holds that $\{v_{i, 1}, v_{i, 2}, \ldots v_{i, s-1} ,w'_i\} \in E(G_{\ell-1})$, by the definition of $L_{\ell}(v_{i, 1}, v_{i, 2}, \ldots, v_{i, s-1} ,J_{i}(w_i))$.
\begin{claim}\label{claim:order}
    $L_1 < \dots < L_m$.
\end{claim}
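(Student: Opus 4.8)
The plan is to prove the sharper statement that $L_i < L_{i+1}$ for each $1 \leq i \leq m-1$; since every $L_i$ is nonempty (for $i<m$ one has $|L_i| \geq |L_\ell(v_{i,1},\dots,J_i(w_i))| - |J_{i+1}(w_i)| \geq 2\gamma|J_i(w_i)| - \gamma|J_i(w_i)| > 0$, and $L_m$ is a full block), transitivity of the relation $<$ on nonempty sets then upgrades these consecutive comparisons to the full chain $L_1 < \dots < L_m$. So I fix $1 \leq i \leq m-1$ and aim to separate $L_i$ from $L_{i+1}$ by inserting the cell $J_{i+1}(w_i)$ between them.

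The first and crucial step is to show that every element of $L_i$ lies strictly to the left of the interval $J_{i+1}(w_i)$. Here I would exploit the hypothesis that the edge $\{v_{i,1},\dots,v_{i,s-1},w_i\}$ survives into $G_\ell$. Writing $L := L_\ell(v_{i,1},\dots,v_{i,s-1},J_i(w_i))$, survival means exactly that $w_i \notin L$, since otherwise this edge would be deleted at step $\ell$ (note $v_{i,1},\dots,v_{i,s-1} \notin I_\ell$ and $J_i(w_i) \in \mathcal{J}_{\ell,i} \subseteq \mathcal{J}_\ell$, so it is one of the edges considered). Now $w_i \in J_i(w_i)$ and $\{v_{i,1},\dots,v_{i,s-1},w_i\} \in E(G_\ell) \subseteq E(G_{\ell-1})$, so $w_i$ is one of the candidate vertices from which $L$ is formed as the leftmost $\beta|J_i(w_i)|$. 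If there were fewer than $\beta|J_i(w_i)|$ candidates, then $L$ would be all of them and in particular contain $w_i$ — a contradiction. Hence $L$ is a full block of size $\beta|J_i(w_i)|$ and, as $w_i$ is excluded from this leftmost block, every element of $L$ is smaller than $w_i$. Since $L_i = L \setminus J_{i+1}(w_i)$ and $J_{i+1}(w_i)$ is the contiguous cell of $\mathcal{J}_{\ell,i+1}$ containing $w_i$, any $x \in L_i$ satisfies both $x < w_i$ and $x \notin J_{i+1}(w_i)$, which forces $x$ below the left endpoint of $J_{i+1}(w_i)$; thus $L_i < J_{i+1}(w_i)$.

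The second step is pure bookkeeping about a common interval partition. The cells $J_{i+1}(w_i)$ and $J_{i+1}(w_{i+1})$ both belong to $\mathcal{J}_{\ell,i+1}$, and $w_i < w_{i+1}$, so either they coincide or $J_{i+1}(w_i) < J_{i+1}(w_{i+1})$; in both cases $J_{i+1}(w_i)$ lies weakly to the left of $J_{i+1}(w_{i+1})$, and combining with the first step gives $L_i < J_{i+1}(w_{i+1})$. Finally, by its very definition $L_{i+1} \subseteq L_\ell(v_{i+1,1},\dots,v_{i+1,s-1},J_{i+1}(w_{i+1})) \subseteq J_{i+1}(w_{i+1})$ (and in the boundary case $i+1=m$ one has $L_m \subseteq J_m(w_m) = J_{i+1}(w_{i+1})$ just the same), so $L_i < L_{i+1}$. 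I expect the only genuine obstacle to be the first step: one must argue, purely from survival of the edge in $G_\ell$ together with the precise definition of the cleaning operation, that $L$ is a \emph{full} leftmost block entirely below $w_i$ — it is exactly the deletion of the sub-cell $J_{i+1}(w_i)$ from this block that pushes $L_i$ strictly to the left of $J_{i+1}(w_i)$, creating the gap that the following cells then respect.
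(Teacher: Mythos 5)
Your proof is correct and takes essentially the same route as the paper's: you derive $L_i < w_i$ from the survival of the edge $\{v_{i,1},\dots,v_{i,s-1},w_i\}$ in $G_{\ell}$ (the paper phrases the full-block observation slightly more tersely but it is the same logic), and then separate $L_i$ from $L_{i+1}$ via $L_i < J_{i+1}(w_i) \leq J_{i+1}(w_{i+1}) \supseteq L_{i+1}$, exactly as in the paper. Your explicit nonemptiness remark — needed to chain the consecutive comparisons, since $A < B$ and $B < C$ are vacuous when $B = \emptyset$ — is a fine point the paper leaves implicit (it follows from Claim \ref{claim:size}), but the argument is otherwise identical.
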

\begin{proof}
    Fix any $1 \leq i < m$, and let us show that $L_i < L_{i+1}$. 
    We begin by proving that $L_i < w_i$, namely, that all vertices of $L_i$ are to the left of $w_i$. Indeed, first note that $L_i \subseteq L_{\ell}(v_{i, 1}, v_{i, 2}, \ldots, v_{i, s-1},J_{i}(w_i))$.
    Recall that $L_{\ell}(v_{i, 1}, v_{i, 2}, \ldots, v_{i, s-1},J_{i}(w_i))$ is a set of leftmost vertices $w \in J_{i}(w_i)$ satisfying $\{v_{i, 1}, v_{i, 2}, \ldots v_{i, s-1},w\} \in E(G_{\ell-1})$, and the edges $\{v_{i, 1}, v_{i, 2}, \ldots v_{i, s-1},w\}$ for $w \in L_{\ell}(v_{i, 1}, v_{i, 2}, \ldots, v_{i, s-1},J_{i}(w_i))$ are deleted 
    when obtaining $G_{\ell}$ from $G_{\ell-1}$, while the edge $\{v_{i, 1}, v_{i, 2}, \ldots v_{i, s-1} ,w_i\}$ is still present in $G_{\ell}$. This shows that $L_{\ell}(v_{i, 1}, v_{i, 2}, \ldots, v_{i, s-1},J_{i}(w_i))$ is to the left of $w_i$, implying that $L_i < w_i$.  
    
    Next, note that $L_i$ is disjoint from $J_{i+1}(w_i)$ by definition. It follows that $L_i < J_{i+1}(w_i)$; indeed, for each $w \in L_i$, we have $J_{i+1}(w) \leq J_{i+1}(w_i)$ as $w < w_i$, and also
    $J_{i+1}(w) \neq J_{i+1}(w_i)$ because $w \notin J_{i+1}(w_i)$, hence $w < J_{i+1}(w_i)$. We conclude that $L_i < J_{i+1}(w_i) \leq J_{i+1}(w_{i+1})$, using that $w_i < w_{i+1}$. As $L_{i+1} \subseteq J_{i+1}(w_{i+1})$, we get that $L_i < L_{i+1}$, as required. 
\end{proof}
\begin{claim}\label{claim:size}
    $|L_i| \geq \left(\frac {\eps}{4t}\right)^{t+1} n$ for all $1 \leq i \leq m$. 
    \end{claim}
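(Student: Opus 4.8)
The plan is to reduce everything to a single observation: each set $L_{\ell}(v_{i,1},\dots,v_{i,s-1},J_i(w_i))$ has its full size $\beta|J_i(w_i)|$, after which the bound follows from a direct size count. First I would establish this \emph{fullness}. Since $\{v_{i,1},\dots,v_{i,s-1},w_i\}\in E(G_{\ell})\subseteq E(G_{\ell-1})$ and $w_i\in J_i(w_i)$, the vertex $w_i$ is one of the candidate vertices $w\in J_i(w_i)$ with $\{v_{i,1},\dots,v_{i,s-1},w\}\in E(G_{\ell-1})$ that are eligible for inclusion in $L_{\ell}(v_{i,1},\dots,v_{i,s-1},J_i(w_i))$. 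However, as already noted in the proof of Claim~\ref{claim:order}, the edge through $w_i$ survives in $G_{\ell}$, so $w_i$ was not deleted and hence $w_i\notin L_{\ell}(v_{i,1},\dots,v_{i,s-1},J_i(w_i))$. By the definition of the cleaning step, $L_{\ell}(v_{i,1},\dots,v_{i,s-1},J_i(w_i))$ consists of \emph{all} eligible candidates when there are fewer than $\beta|J_i(w_i)|$ of them; were that the case, $w_i$ itself would have been included, a contradiction. Therefore there are at least $\beta|J_i(w_i)|$ candidates and $|L_{\ell}(v_{i,1},\dots,v_{i,s-1},J_i(w_i))|=\beta|J_i(w_i)|$ exactly.

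Next I would carry out the size computation, recalling that $J_i(w_i)\in\mathcal{J}_{\ell,i}$ has size $\gamma^{i-1}|I_{\ell}|$, that $|I_{\ell}|=\varepsilon n$, and that $\beta=2\gamma$ with $\gamma=\varepsilon/(4t)$, so in particular $\varepsilon=4t\gamma$. For $i=m$ we have $L_m=L_{\ell}(v_{m,1},\dots,v_{m,s-1},J_m(w_m))$, so fullness gives
\[
|L_m|=\beta\gamma^{m-1}|I_{\ell}|=2\gamma^{m}\varepsilon n=8t\,\gamma^{m+1}n\geq\gamma^{t+1}n,
\]
using $m\leq t$ and $\gamma\le 1$. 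For $i<m$, the set $L_i$ is obtained from $L_{\ell}(v_{i,1},\dots,v_{i,s-1},J_i(w_i))$ by removing $J_{i+1}(w_i)$, which has size $\gamma^{i}|I_{\ell}|$; hence
\[
|L_i|\geq \beta\gamma^{i-1}|I_{\ell}|-\gamma^{i}|I_{\ell}| = 2\gamma^{i}|I_{\ell}|-\gamma^{i}|I_{\ell}| =\gamma^{i}\varepsilon n=4t\,\gamma^{i+1}n\geq\gamma^{t+1}n,
\]
where the last inequality uses $i+1\leq m\leq t$ and $\gamma\le 1$. This is exactly the claimed bound $\left(\frac{\eps}{4t}\right)^{t+1}n=\gamma^{t+1}n$.

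The genuinely substantive step is the fullness argument in the first paragraph; everything after it is routine arithmetic. The delicate point is that fullness hinges on the interplay between deletion at level $\ell$ and survival into $G_{\ell}$, i.e. on the fact that the surviving edge through $w_i$ certifies that at least $\beta|J_i(w_i)|$ candidates were available to be deleted. This is precisely the structural property set up in Claim~\ref{claim:order}, so I would present it as a short consequence of that observation rather than re-deriving it from scratch. Once fullness is in hand, the only care needed is to choose the subtraction $J_{i+1}(w_i)$ large enough to enforce the ordering (Claim~\ref{claim:order}) yet small enough not to destroy the size bound, which the relation $\beta=2\gamma$ guarantees.
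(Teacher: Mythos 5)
Your proof is correct and follows essentially the same route as the paper: you deduce $|L_{\ell}(v_{i,1},\dots,v_{i,s-1},J_i(w_i))| \geq \beta|J_i(w_i)|$ from the survival of the edge through $w_i$ in $G_{\ell}$, then subtract $|J_{i+1}(w_i)| = \gamma|J_i(w_i)| = \frac{\beta}{2}|J_i(w_i)|$ and compute. Your only deviations are cosmetic: you upgrade the paper's lower bound to exact fullness (which you never need) and split the cases $i<m$ and $i=m$ explicitly, whereas the paper handles them uniformly.
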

    \begin{proof}
    Recall that for every $w \in I_{\ell}$, we have 
    \[|J_t(w)| = \gamma |J_{t-1}(w)| = \ldots = \gamma^{t-1}|J_1(w)| = \gamma^{t-1}|I_{\ell}| = \gamma^{t-1}\varepsilon n > \left(\frac{\varepsilon}{4t}\right)^tn.\] 
    Now, observe that $|L_{\ell}(v_{i, 1}, v_{i, 2}, \ldots, v_{i, s-1}, J_i(w_i))| \geq \beta |J_i(w_i)|$, because otherwise we would have deleted all edges of the form $\{v_{i, 1}, v_{i, 2}, \ldots, v_{i, s-1},w\}$ with $w \in J_i(w_i)$ when obtaining $G_{\ell}$ from $G_{\ell-1}$, but the edge $\{v_{i, 1}, v_{i, 2}, \ldots, v_{i, s-1},w_i\}$ is still present in $G_{\ell}$. 
    Using that $|J_{i+1}(w_i)| = \gamma |J_i(w_i)| = \frac{\beta}{2}|J_i(w_i)|$, we get by the definition of $L_i$ that
    \begin{align*} |L_i| &\geq |L_{\ell}(v_{i, 1}, v_{i, 2}, \ldots, v_{i, s-1}, J_i(w_i))| - |J_{i+1}(w_i)| \geq \frac{\beta}{2}|J_i(w_i)| \geq \left(\frac{\varepsilon}{4t}\right)^{t+1}n.
    \end{align*}
    \end{proof}
    We now complete the proof of Lemma \ref{lem:main}.
    As we saw above, for every $1 \leq i \leq m$ and $w'_i \in L_i$, it holds that $\{v_{i, 1}, v_{i, 2}, \ldots v_{i, s-1} ,w'_i\} \in E(G_{\ell-1})$.
    Thus, the lemma follows from Claims \ref{claim:order} and \nolinebreak\ref{claim:size}.
 \end{proof}

Recall that $G_k$ contains a copy of $H$; denote it $H_k$. We can now use this initial copy and Lemma \ref{lem:main} to construct the required number of distinct $H$-copies in $G_0$. This will be done in the following lemma. 
For $1 \leq \ell \leq k$, let $m_{\ell}$ be the number of vertices of $H_k$ in the interval $I_{\ell}$. For convenience, put $\delta := \left(\frac {\varepsilon}{4t}\right)^{t+1}$.
\begin{lemma}\label{lem:copy construction}
For every $\ell = k,\dots,0$, there are at least 
$(\delta n)^{m_{\ell+1} + \dots + m_k}$ copies of $H$ in $G_{\ell}$ which have $m_i$ vertices in $I_i$ for every $1 \leq i \leq k$, and have the same vertices as $H_k$ in $I_1 \cup \dots \cup I_{\ell}$. 
\end{lemma}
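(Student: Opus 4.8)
The plan is to prove the statement by \emph{downward} induction on $\ell$, from $\ell = k$ down to $\ell = 0$, peeling off one interval at a time. The base case $\ell = k$ is immediate: the exponent $m_{k+1} + \dots + m_k$ is empty (equal to $0$), so we only need a single copy of $H$ in $G_k$ having $m_i$ vertices in $I_i$ and agreeing with $H_k$ on all of $I_1 \cup \dots \cup I_k = [n]$, and $H_k$ itself is such a copy.

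For the inductive step, I would assume the statement for $\ell$ and let $H'$ be any of the at least $(\delta n)^{m_{\ell+1} + \dots + m_k}$ copies of $H$ in $G_\ell$ provided by the hypothesis. Since $H$ is a matching and, by the very first cleaning step producing $G_0$, every edge of $G_\ell \subseteq G_0$ has at most one vertex in each interval, the $m_\ell$ vertices $w_1 < \dots < w_{m_\ell}$ of $H'$ lying in $I_\ell$ must lie in $m_\ell$ distinct edges of $H'$; I write the edge through $w_i$ as $\{v_{i,1}, \dots, v_{i,s-1}, w_i\}$ with all $v_{i,j} \in [n] \setminus I_\ell$. These edges belong to $G_\ell$, so the hypotheses of Lemma \ref{lem:main} are met (with $m = m_\ell$), and the lemma supplies at least $(\varepsilon/4t)^{m_\ell(t+1)} n^{m_\ell} = (\delta n)^{m_\ell}$ increasing tuples $w'_1 < \dots < w'_{m_\ell}$ in $I_\ell$ with $\{v_{i,1}, \dots, v_{i,s-1}, w'_i\} \in E(G_{\ell-1})$ for every $i$. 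For each such tuple I form a new map by replacing the image $w_i$ of $H'$ by $w'_i$ and keeping all other vertices fixed.

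Next I would verify that each of these maps is genuinely a copy of $H$ in $G_{\ell-1}$ with the required properties. Edge-preservation is clear: the edges through the $w_i$ become the edges $\{v_{i,1}, \dots, v_{i,s-1}, w'_i\}$ guaranteed by Lemma \ref{lem:main} to lie in $G_{\ell-1}$, while every other edge of $H'$ is untouched and already lies in $G_\ell \subseteq G_{\ell-1}$. Order-preservation holds because all of $w_i, w'_i$ lie in the single interval $I_\ell$: the relative order among the replaced vertices is inherited from $w'_1 < \dots < w'_{m_\ell}$, and the position of each replaced vertex relative to any vertex outside $I_\ell$ is unchanged. Finally, only vertices in $I_\ell$ were moved, so the new copy still has $m_i$ vertices in $I_i$ for all $i$ and still agrees with $H_k$ on $I_1 \cup \dots \cup I_{\ell-1}$.

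The one point requiring care, and the main obstacle to a clean count, is to avoid over-counting across the different choices of $H'$. Here I would use that every copy $H'$ furnished by the inductive hypothesis agrees with $H_k$ on $I_1 \cup \dots \cup I_\ell$, so in particular all of them have the \emph{same} vertices in $I_\ell$; thus each $H'$ is determined by its vertices outside $I_\ell$. Consequently a newly built copy determines $H'$ (read off its vertices outside $I_\ell$) and then the tuple $(w'_1, \dots, w'_{m_\ell})$ (read off its vertices inside $I_\ell$), so the assignment sending a pair $(H', (w'_i)_i)$ to the new copy is injective. Multiplying the $(\delta n)^{m_{\ell+1} + \dots + m_k}$ choices of $H'$ by the $(\delta n)^{m_\ell}$ tuples per copy yields at least $(\delta n)^{m_\ell + m_{\ell+1} + \dots + m_k}$ distinct copies of $H$ in $G_{\ell-1}$, which is exactly the bound claimed for $\ell - 1$, completing the induction.
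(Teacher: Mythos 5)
Your proposal is correct and follows essentially the same route as the paper's proof: reverse induction on $\ell$, applying Lemma \ref{lem:main} to replace the $m_\ell$ vertices in $I_\ell$ of each copy supplied by the inductive hypothesis, with distinctness ensured because all such copies agree with $H_k$ inside $I_\ell$ and are therefore determined by their vertices outside it. Your explicit checks of order-preservation, edge-preservation (using that $H$ is a matching, so each $w_i$ lies in a single edge), and injectivity of the map $(H', (w'_i)_i) \mapsto$ new copy merely spell out steps the paper treats more briefly.
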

\begin{proof}
    The proof is by reverse induction on $\ell$. 
    The base case $\ell = k$ holds trivially, because $H_k$ is a copy of $H$ in $G_k$ satisfying the required properties, and $(\delta n)^0 = 1$. 
    For the induction step, let $0 < \ell \leq k$. By the induction hypothesis, there is a collection $\mathcal{H}_{\ell}$ of at least $(\delta n)^{m_{\ell+1} + \dots + m_k}$ copies of $H$ in $G_{\ell}$ which have $m_i$ vertices in $I_i$ for every $1 \leq i \leq k$, and have the same vertices as $H_k$ in $I_1 \cup \dots \cup I_{\ell}$. 
    If $m_{\ell} = 0$ then there is nothing to prove, so suppose that $m_{\ell} \geq 1$. 
    Fix any $H_{\ell} \in \mathcal{H}_{\ell}$. Note that every edge of $H_{\ell}$ touching $I_{\ell}$ has exactly one vertex in $I_{\ell}$, because every edge of $G_0$ has at most one vertex in each of the intervals $I_1,\dots,I_k$ (by the definition of $G_0$). Namely, every edge $e \in E(H_{\ell})$ with $e \cap I_{\ell} \neq \emptyset$ is of the form $\{v_1,\dots,v_{s-1},w\}$ with $w \in I_{\ell}$ and $v_1,\dots,v_{s-1} \in [n] \setminus I_{\ell}$. Let $w_1 < \dots < w_{m_{\ell}}$ be the vertices of $H_{\ell}$ in $I_{\ell}$. 
    For each $1 \leq i \leq m_{\ell}$, let $v_{i,1},\dots,v_{i,s-1} \in [n] \setminus I_{\ell}$ such that $\{v_{i,1},\dots,v_{i,s-1},w_i\} \in E(H_{\ell})$.
    By Lemma \ref{lem:main}, we can replace $w_1,\dots,w_{m_{\ell}}$ in $(\delta n)^{m_{\ell}}$ ways to obtain copies of $H$ in $G_{\ell-1}$. Doing this for different $H_{\ell},H'_{\ell} \in \mathcal{H}_{\ell}$ gives different copies of $H$, because $H_{\ell},H'_{\ell}$ differ on vertices outside $I_{\ell}$ (as they both agree with $H_k$ on $I_{\ell}$), and we do not change the vertices of $H_{\ell},H'_{\ell}$ which are outside $I_{\ell}$.
    Thus, doing the above for each $H_{\ell} \in \mathcal{H}_{\ell}$ gives the required $(\delta n)^{m_{\ell}}|\mathcal{H}_{\ell}| \geq (\delta n)^{m_{\ell} + \dots + m_k}$ copies of $H$ in $G_{\ell-1}$. This completes the induction step. 
\end{proof}
For $\ell = 0$, Lemma \ref{lem:copy construction} gives 
\[ (\delta n)^{m_1 + \dots + m_k} = (\delta n)^t = \left(\frac {\varepsilon}{4t}\right)^{t(t+1)} n^t\]
copies of $H$ in $G_0$ (and so in $G$), as required. 
This proves the theorem. 

\section{Concluding remarks}
We proved that ordered matchings admit a polynomial removal lemma. It would be interesting to extend this result to other families of ordered forests, such as ordered paths. 

Our proof of Theorem \ref{thm:main} shows that one can take $C(t) = O(t^2)$. It would be interesting to improve this to $C(t) = O(t)$.
For fixed $s$ and growing $t$, this would be tight, as shown by the following proposition:
\begin{proposition}\label{prop:lower bound}
    Let $H$ be an ordered $s$-uniform hypergraph with $t$ vertices and $m$ edges, let $\varepsilon > 0$ be small enough, and let $n \geq n_0(\varepsilon)$. Then there exists an $n$-vertex $s$-uniform ordered hypergraph $G$ which is $\varepsilon$-far from being $H$-free but contains only $O(n^t \varepsilon^m)$ copies of $H$. 
\end{proposition}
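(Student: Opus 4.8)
The plan is to produce $G$ by the probabilistic method. I would take $G = G^{(s)}(n,p)$, the random ordered $s$-uniform hypergraph on vertex set $[n]$ (with its natural order) in which each of the $\binom{n}{s}$ potential edges is included independently with probability $p := c\varepsilon$, where $c = c(H)$ is a large constant fixed at the end. Since the $m$ edges of $H$ are distinct $s$-subsets of $V(H)$, and the number of order-preserving injections $V(H)\to[n]$ is exactly $\binom{n}{t}$, the expected number of copies of $H$ in $G$ is $\mathbb{E}[T]=\binom{n}{t}p^m=(1+o(1))\tfrac{(c\varepsilon)^m}{t!}\,n^t=\Theta(\varepsilon^m n^t)$. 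Likewise, for a fixed $s$-set $e$, the number $Z_e$ of copies of $H$ in $G\cup\{e\}$ that use $e$ as one of their edges has $\mathbb{E}[Z_e]=\Theta(\varepsilon^{m-1}n^{t-s})$. These first-moment computations already pin down the correct orders of magnitude; the real work is to upgrade them to high-probability statements.

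The main step is concentration. First I would show that whp $T=(1+o(1))\mathbb{E}[T]$; since $\varepsilon$ is a fixed constant and $n\to\infty$, this is routine via the second moment method, as the variance is dominated by pairs of copies overlapping in at least one edge and is $o(\mathbb{E}[T]^2)$ for $n$ large. This delivers both halves I need from $T$, namely $T\le 2\mathbb{E}[T]=O(\varepsilon^m n^t)$ and $T\ge\tfrac12\mathbb{E}[T]$. The crux is then to show that whp $\max_e Z_e\le 2\mathbb{E}[Z_e]$ simultaneously over all $\binom{n}{s}$ sets $e$. For this I would apply an edge-exposure martingale to the single statistic $Z_e$: flipping the status of one potential edge $f$ changes $Z_e$ by at most the number of copies of $H$ through both $e$ and $f$, which is $O(n^{t-2s})$, so the sum of squared Lipschitz constants is $O(n^{s}\cdot n^{2(t-2s)})=O(n^{2t-3s})$. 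With $\lambda=\mathbb{E}[Z_e]=\Theta(n^{t-s})$, Azuma's inequality gives $\Pr[Z_e>2\mathbb{E}[Z_e]]\le\exp(-\Omega(n^{s}))$, which comfortably survives a union bound over the $O(n^{s})$ edges.

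Conditioning on all these high-probability events (which hold simultaneously with probability $1-o(1)$, so a good $G$ exists), I would conclude as follows. For the $\varepsilon$-far property, take any $F\subseteq E(G)$ with $|F|<\varepsilon n^s$; deleting $F$ destroys at most $\sum_{e\in F}Z_e\le|F|\cdot\max_eZ_e<2\varepsilon n^s\,\mathbb{E}[Z_e]=O(c^{m-1}\varepsilon^{m}n^{t})$ copies of $H$, whereas $T\ge\tfrac12\mathbb{E}[T]=\Omega(c^{m}\varepsilon^{m}n^{t})$. The ratio of these two quantities is $O(1/c)$, so choosing $c=c(H)$ large enough forces the number of destroyed copies to be strictly less than $T$; hence a copy of $H$ survives in $G\setminus F$, and therefore no set of fewer than $\varepsilon n^s$ edges meets every copy, i.e.\ $G$ is $\varepsilon$-far from $H$-free. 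Together with $T=O(\varepsilon^m n^t)$, this gives both required properties. The main obstacle is precisely the uniform control of $\max_e Z_e$: the copy counts through an edge are low-degree polynomials in dependent edge-indicators, so a naive second moment yields only polynomial tail decay, too weak for a union bound over $n^s$ edges; the resolution is to exploit that a single edge flip perturbs $Z_e$ by only $O(n^{t-2s})$, which is exactly what powers the exponential martingale bound.
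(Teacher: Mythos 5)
Your proposal is correct in outline and follows essentially the same route as the paper's (sketched) proof: a binomial random ordered hypergraph with edge probability $\Theta(\varepsilon)$, first-moment counts for the total number of copies and for the number of copies through a fixed edge, Azuma-type concentration, and the deletion-counting argument; your rescaling $p=c\varepsilon$ to get exactly $\varepsilon$-far (where the paper settles for $\Omega(\varepsilon)$-far) is a cosmetic refinement. Two slips in the concentration step deserve correction, though neither is fatal. First, the proposition allows an arbitrary $s$-uniform $H$, whose edges may overlap: if $f$ satisfies $|e\cap f|=j$ and $H$ has two edges sharing $j$ vertices, flipping $f$ changes $Z_e$ by up to $\Theta(n^{t-2s+j})$, not $O(n^{t-2s})$ (your bound is valid only for matchings). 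The sum of squared Lipschitz constants is then $O\bigl(\sum_{j=0}^{s-1} n^{s-j}\cdot n^{2(t-2s+j)}\bigr)=O(n^{2t-2s-1})$, and Azuma gives $\exp(-\Omega(n))$ rather than $\exp(-\Omega(n^s))$ --- still comfortably enough for the union bound over $O(n^s)$ edges. Second, because copies must respect the vertex order, $\mathbb{E}[Z_e]$ is not $\Theta(\varepsilon^{m-1}n^{t-s})$ uniformly: for $e$ near the ends of $[n]$ it can be polynomially smaller or zero, in which case the event $Z_e>2\mathbb{E}[Z_e]$ has too small a deviation for your Azuma computation to control. The fix is to prove the uniform bound $\max_e Z_e\le M$ for a single threshold $M=\Theta(\varepsilon^{m-1}n^{t-s})$ (this is what the paper asserts), which follows from the same martingale estimate since $\mathbb{E}[Z_e]\le M/2$ for all $e$; your deletion count $\sum_{e\in F}Z_e\le |F|\cdot M$ and the choice of $c$ large then conclude the argument exactly as you wrote.
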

If $H$ is an ordered matching then $e(H) = \frac{t}{s}$, so 
the proposition implies that $C(t) \geq \frac{t}{s}$. We give a proof sketch of the proposition.
\begin{proof}[Proof sketch of Proposition \ref{prop:lower bound}]
Take $G \sim \mathbb{H}_s(n,\varepsilon)$ to be the random (binomial) ordered $s$-uniform hypergraph; namely, each of the $\binom{n}{s}$ potential edges is included in $G$ with probability $\varepsilon$, independently. The expected number of copies of $H$ in $G$ is 
$\Theta(n^t \varepsilon^m)$, because there are $\Theta(n^t)$ potential copies, and each appears with probability $\varepsilon^{m}$. Similarly, a given edge of $G$ appears in expectation in $O(n^{t-s} \varepsilon^{m-1})$ copies of $H$. Using Azuma's inequality (see e.g. \cite[Chapter 7]{Prob_Method}), one can show that w.h.p. every edge appears in $O(n^{t-s} \varepsilon^{m-1})$ copies of $H$, and the total number of copies of $H$ is $\Theta(n^t \varepsilon^m)$. Thus, one has to delete $\Omega\left( \frac{n^t \varepsilon^m}{n^{t-s} \varepsilon^{m-1}} \right) = \Omega(\varepsilon n^s)$ edges to destroy all copies of $H$, meaning that $G$ is $\Omega(\varepsilon)$-far from being $H$-free. 
\end{proof}

\bibliographystyle{siam} 
\bibliography{library}


\end{document}